\newtheorem{thm}{Theorem}[section]
\newtheorem{definition}[thm]{Definition}
\newtheorem{prop}[thm]{Proposition}
\newcommand{\chg}[1]
{
\textcolor{black}{#1}
}
\def\BState{\State\hskip-\ALG@thistlm}
\author[1]{Bernadett \'Acs}
\author[1,2]{G\'abor Szederk\'enyi}
\author[3,4]{Zsolt Tuza} 
\author[5]{Zolt\'an A. Tuza\footnote{This research was performed while ZT was with the Faculty of Information Technology and Bionics of Pázmány Péter Catholic University.}}
\affil[1]{\small P\'azm\'any P\'eter Catholic University, Faculty of Information Technology and Bionics, \mbox{Práter u. 50/a},  H-1083 Budapest, Hungary} 
\affil[2]{\small Systems and Control Laboratory, Institute for Computer Science and Control (MTA SZTAKI) of the Hungarian Academy of Sciences, Kende u. 13-17, H-1111 Budapest, Hungary}
\affil[3]{\small Department of Computer Science and Systems Technology, University of Pannonia, \mbox{Egyetem u. 10, H-8200 Veszprém, Hungary}}
\affil[4]{\small Alfréd Rényi Institute of Mathematics of the Hungarian Academy of Sciences, \mbox{Reáltanoda u. 13-15, H-1053 Budapest, Hungary}}
\affil[5]{\small University of Stuttgart Institute for Systems Theory and Automatic Control, \mbox{Pfaffenwaldring 9,  D-70569 Stuttgart, Germany}}
\affil[ ]{\small e-mail: acs.bernadett@itk.ppke.hu, szederkenyi@itk.ppke.hu, tuza@dcs.uni-pannon.hu, zoltan.tuza@ist.uni-stuttgart.de}
\title{Computing all possible graph structures describing linearly conjugate realizations of kinetic systems}
\date{}
\begin{document}

\maketitle

\begin{abstract}
In this paper an algorithm is given to determine all possible structurally different linearly conjugate 
realizations of a given kinetic polynomial system. The solution is based on the iterative search for 
constrained dense realizations using linear programming. Since there might exist exponentially many different 
reaction graph structures, we cannot expect to have a polynomial-time algorithm, but we can organize the computation in such a way that 
polynomial time is elapsed between displaying any two consecutive realizations. The correctness of the 
algorithm is proved, and \chg{possibilities of a parallel implementation} are discussed. The operation of the 
method is shown on two illustrative examples.
\end{abstract}

\textbf{Keywords}: reaction networks, reaction graphs, linear conjugacy, linear programming

\section{Introduction}
Chemical reaction networks (CRNs) obeying the mass action law can be originated  
from the dynamical modelling of chemical and biochemical processes. However, these models are also capable of
describing all important phenomena of nonlinear dynamical behaviour and have several applications in different 
fields of science and engineering \cite{Erdi1989, Samardzija1989}. 
Kinetic systems, which describe the dynamical behaviour of the CRNs 
have a simple algebraic characterization, which makes it possible to develop effective computational methods 
for dynamical analysis, and even control \cite{Angeli2009,Chellaboina2009,Haddad2010}. 

The graph representation of reaction networks, especially the exploration 
of the relation between the reaction graph structure and the dynamics of the network (preferably without the 
precise knowledge of the reaction rate coefficients) has become an important research area in chemical 
reaction network theory since the 1970's. \chg{One of the first comprehensive overviews of mass-action-type CRNs can be found in \cite{Horn1972} clearly indicating that the class of such kinetic systems is far more general than the description of ``chemically reacting mixtures in closed vessels''. In the same article, the Kirchhoff matrix based description of reaction networks is also introduced.} 

\chg{Complex 
balance is a fundamentally important property of CRNs (see, e.g \cite{Horn1972a,Feinberg1972,Anderson2011}). 
Roughly speaking, complex balance means that the signed sum of the incoming and outgoing fluxes corresponding 
to each vertex (complex) at equilibrium is zero. It is an interesting and important property that the set of equilibrium points of any complex balanced system forms a toric variety in the state space \cite{Craciun2009}.  One of the most significant achievements in chemical reaction network theory is the Deficiency Zero Theorem saying that weakly reversible reaction networks of deficiency zero are complex balanced, independently of the values of the reaction rate coefficients \cite{Feinberg1987}. An important 
recent result related to complex balance is the possible general proof of the Global Attractor Conjecture 
\cite{Craciun2015}. According to this conjecture, complex balanced CRNs with mass action kinetics are globally 
stable within the positive orthant with a known logarithmic Lyapunov function that is independent of the 
reaction rate coefficients. The experimentally observed robust dynamical behavior of certain reaction mechanisms can also be understood from the special properties of the network structure \cite{Shinar2010}. }

\chg{It is known from the ``fundamental dogma of chemical kinetics'' that the reaction graph corresponding to a given kinetic dynamics is generally 
non-unique \cite{Espenson1995,Epstein1998a}. This phenomenon is also called \textbf{dynamical equivalence} or \textbf{macro-equivalence} \cite{Horn1972, Hars1981}, and it has been studied in the case of Michaelis-Menten kinetics, too \cite{Schnell2006}.  Clearly, this kind of structural non-uniqueness 
may seriously hamper the parameter estimation (inference) of reaction networks from measurement data, that is 
generally a challenging task \cite{Binder2015,Villaverde2014}. To improve the solvability of such inference problems, it is worth involving as much prior information as possible into the computation problem in the form of additional constraints \cite{Jaynes1984,Szederkenyi2011c,Siegenthaler2014}. 
Necessary and sufficient conditions for a general polynomial vector field to be kinetic were first given in \cite{Hars1981}. In the constructive proof, a procedure was given to construct one possible dynamically equivalent reaction network structure (called the canonical structure) realizing a kinetic dynamics. 
The first theoretical results about dynamical 
equivalence of CRNs were published in \cite{Craciun2008} pointing out the convex cone structure of the parameter 
space. Motivated by these results, the numerical computation of dynamically equivalent realizations was first 
put into an optimization framework in \cite{Szederkenyi2009b} by proposing a method for determining the graph 
structures containing the minimum or maximum number of reactions. It is also important to know that several 
fundamental properties of CRNs such as (weak) reversibility, complex and detailed balance or deficiency  
are not `encoded' into the kinetic differential equations but may vary with the dynamically equivalent 
structures corresponding to the same dynamics. This fact is an important motivating factor to develop methods for the computation of reaction graphs of kinetic systems.} 

There is a significant extension of dynamical equivalence 
called linear conjugacy, where the kinetic model is subject to a positive definite linear diagonal 
state transformation \cite{Johnston2011conj}. 
Obviously, linear conjugacy preserves the main qualitative dynamical properties of CRNs like stability and
multiplicities or the boundedness of solutions. However, due to the larger degree of freedom introduced by the 
transformation parameters, it allows a wider variety of possible reaction graph structures than dynamical 
equivalence does. \chg{Therefore, several optimization based computational methods have been suggested to find 
linearly conjugate, or as a special case, dynamically equivalent realizations of kinetic systems having 
preferred properties such as density/sparsity, complex or detailed balance, or minimum deficiency 
\cite{Johnston2013}. An algorithm for finding all possible sparse dynamically equivalent reaction network 
structures was proposed and applied for a Lorenz system transformed into kinetic form in \cite{LorenzCRN}, 
using the mixed integer linear programming (MILP) framework. }

After treating the above mentioned important special cases,  a question arises naturally: Is it 
possible to give a computationally efficient algorithm for  determining all possible  reaction graph 
structures corresponding to linearly conjugate CRN realizations of a given kinetic 
system? The aim of this 
paper is to give a solution to this problem. Of course, we cannot expect to find a polynomial-time algorithm 
for the overall problem, since exponentially many different realizing graph structures may exist for a kinetic 
model. However, as it will be shown, it is possible to achieve that each computation step (i.e. finding and 
displaying the next realization after the previous one) is performed in polynomial time. 

\section{Basic notions}
In this section, we summarize the basic notions and results for both algebraic and graph-based representations 
of CRNs. We will use the following notations:

\begin{tabular}{ll}
$\mathbb{R}$ & the set of real numbers\\
$\mathbb{R_+}$ & the set of nonnegative real numbers\\
$\mathbb{N}$ & the set of natural numbers, including 0\\
$H^{n \times m}$ & the set of matrices having entries from a set $H$ in  $n$ rows and $m$ columns\\
$[M]_{ij}$ & the entry in row $i$ and column $j$ of matrix $M$
\end{tabular}

\subsection{Algebraic characterization} \label{subs:alg}
It is important to remark here that similarly to \cite{Feinberg1987}, \cite{Erdi1989} or 
\cite{Samardzija1989}, we consider reaction networks as a general system class representing nonlinear 
dynamical systems with nonnegative states, therefore we do not require that they fulfil actual physico-
chemical constraints like mass conservation. In other words, there is no constraint on how different 
complexes can transform into each other in the network.

\begin{definition}
Chemical reaction networks can be determined by the following three sets \emph{(see e.g. 
\cite{Feinberg:79, Feinberg1987})}.
\begin{itemize}
\item A set of \textbf{species}: $\mathcal{S}= \{X_i\  |\  i \in \{1,\ldots,n\}\}$
\item A set of \textbf{complexes}: $\mathcal{C} = \{ C_j \ |\  j\in\{1,\ldots,m\}\}$, where\\
$C_j = \sum \limits_{i=1}^{n} \alpha_{ji} X_i \qquad j \in \{1,\ldots,m\}$  \\
$\alpha_{ji} \in \mathbb{N} \qquad \qquad \quad j \in\{1,\ldots,m\}, \   i \in \{1,\ldots,n\} $ 

The complexes are formal linear combinations of the species with coefficients, called the 
\textbf{stoichiometric coefficients}.
\item A set of \textbf{reactions}: $\mathcal{R} \subseteq \{(C_i,C_j)\  |\  C_i,C_j \in \mathcal{C}\}$\\
The ordered pair $(C_i,C_j)$ corresponds to the reaction $C_i \rightarrow C_j$.
\end{itemize}

\noindent To each ordered pair  $(C_i,C_j)$ where $i,j \in \{1, \ldots m\}$ \chg{and $i \neq j$}  there 
belongs a nonnegative real number $k_{ij}$ called the \textbf{reaction rate coefficient}. The reaction 
$C_i \rightarrow C_j$ takes place if and only if the corresponding coefficient is positive.
\end{definition}

\bigskip

\noindent The properties of the reaction network are encoded by special matrices.

\begin{definition} \label{Y}
$Y \in \mathbb{N}^{n \times m}$ is the \textbf{complex composition matrix} of the CRN if its entries are the 
stoichiometric coefficients.
\begin{equation}
[Y]_{ij}=\alpha_{ji} \qquad  i \in \{1,\ldots,n\}, \  j \in\{1,\ldots,m\}
\end{equation}
\end{definition}

\begin{definition} \label{Ak}
$A_k \in \mathbb{R}^{m \times m}$ is the \textbf{Kirchhoff matrix} of the CRN if its entries are determined by 
the reaction rate coefficients as follows:
\begin{equation}
[A_k]_{ij}= \begin{cases} 
k_{ji} &\text{ if } i \neq j\\
-\sum \limits_{l=1, l\neq i}^{m} k_{il} & \text{ if } i=j\\
\end{cases}
\end{equation}
Since the sum of entries in each column is zero, $A_k$ is also called a 
\textbf{column conservation matrix}.
\end{definition}

Let $x:\mathbb{R} \rightarrow \mathbb{R}^n_{+}$ be a function, which defines the  
\textbf{concentrations of the species} depending on time. 
Assuming mass-action kinetics, the dynamics of the function $x$ can be described by dynamical 
equations of the following form:

\begin{equation}\label{eq:kinsys}
\dot{x}=Y \cdot A_k \cdot \psi(x)
\end{equation}
where $\psi: \mathbb{R}_+^n \rightarrow \mathbb{R}_+^m$  is a monomial-type vector-mapping,
\begin{equation} \label{eq:psi}
\psi_j(x)=\prod \limits_{i=1}^{n} x_i^{\alpha_{ji}}, \quad j =1,\dots,m
\end{equation}

\bigskip

According to Definitions \ref{Y}, \ref{Ak} and Equations \eqref{eq:kinsys}, \eqref{eq:psi} the dynamics of a 
reaction network can be given by a set of polynomial ODEs, but not every polynomial system describes a CRN.

\begin{definition}
Let $x: \mathbb{R} \rightarrow \mathbb{R}_{+}^n$ be a function, $M\in\mathbb{R}^{n\times p}$ a matrix and 
$\varphi: \mathbb{R}_+^n \rightarrow \mathbb{R}_+^p$ a monomial function.
The polynomial system
\begin{equation}\label{eq:kin_poly}
\dot{x} = M \cdot \varphi(x)
\end{equation}
is called \textbf{kinetic} if there exist a matrix  $Y\in\mathbb{N}^{n\times m}$ and a Kirchhoff matrix 
$A_k\in\mathbb{R}^{m\times m}$, so that
\begin{equation} \label{eq:dyneq}
M\cdot \varphi(x) = Y\cdot A_k\cdot \psi(x)
\end{equation}
where $\psi:\mathbb{R}_+^n \rightarrow \mathbb{R}_+^m$ is a monomial function determined by the entries of  
matrix $Y$, $\psi_j(x)=\prod_{i=1}^n x_i^{[Y]_{ij}}$ for $j \in \{1,\dots,m \}$.
\end{definition}

The matrices $Y$ and $A_k$ characterize the dynamics of the kinetic system, as well as the CRN. However, the 
polynomial kinetic system does not uniquely determine the matrices. Reaction networks with different sets of 
complexes and reactions can be governed by the same dynamics, see e.g. 
\cite{Horn1972, Craciun2008, Szederkenyi2009b}. 
If the matrices $Y$ and $A_k$ of a reaction network fulfil Equation \eqref{eq:dyneq}, then the CRN is 
called a \textbf{dynamically equivalent realization} of the kinetic system \eqref{eq:kinsys}, and it is 
denoted by the matrix pair $(Y,A_k)$.

\bigskip

The notion of dynamical equivalence can be extended to the case when the polynomial system is 
subject to a positive linear diagonal state transformation. It is known 
from \cite{Farkas1999} that such a transformation preserves the kinetic property of the system.

Let $T \in \mathbb{R}^{n \times n}$ be a positive definite diagonal matrix. The state 
transformation is performed as follows:
\begin{equation}
x=T \cdot \bar{x}, \quad \bar{x}=T^{-1} \cdot x
\end{equation}
Applying it to the polynomial system  \eqref{eq:kin_poly} we get
\begin{equation}
\dot{\bar{x}}=T^{-1} \cdot \dot{x}=T^{-1} \cdot  M \cdot \varphi(x)=T^{-1} \cdot  M \cdot \varphi(T \cdot \bar{x})=T^{-1}\cdot M \cdot \Phi_T \cdot \varphi(\bar{x})
\end{equation}
where $\Phi_T \in \mathbb{R}^{n \times n}$ is a positive definite diagonal matrix so that 
$[\Phi_T]_{ii} = \varphi_i(T \cdot \mathbf{1})$ for $i \in \{1, \ldots, n\}$, $\varphi_i$ is the 
$i$th coordinate function of $\varphi$, and $\mathbf{1} \in \mathbb{R}^n$ is a column vector with all 
coordinates equal to $1$. Now we can give the extended definition.
\begin{definition}\label{def:linconj}
A reaction network $(Y,A_k')$ is a \textbf{linearly conjugate realization} of the kinetic system 
\eqref{eq:kin_poly}  if there exists a positive definite diagonal matrix $T \in \mathbb{R}^{n\times n}$ so 
that
\begin{equation}\label{eq:linconj}
Y\cdot A_k'\cdot \psi(x)  = T^{-1}\cdot M \cdot \Phi_T \cdot \varphi(x)
\end{equation}
where $Y\in\mathbb{N}^{n\times m}$, $\psi: \mathbb{R}_+^n \rightarrow \mathbb{R}_+^m$ with 
$\psi_j(x)=\prod_{i=1}^n x_i^{[Y]_{ij}}$ for $j\in \{ 1,\ldots,m\}$,
and  $A_k' \in\mathbb{R}^{m\times m}$ is a Kirchhoff matrix.
\end{definition}

\bigskip

It can be seen that dynamical equivalence is a special case of linear conjugacy, when the matrix $T$, and 
therefore the matrices $T^{-1}$ and $\Phi_T$ as well are identity matrices.

\bigskip

Since the monomial functions $\varphi$ and $\psi$ in Equations \eqref{eq:dyneq} and \eqref{eq:linconj} 
might be different, in both cases the set of complexes is not fixed. 
By applying the method described in \cite{Hars1981} a suitable set of complexes can be determined, but there 
are other possible sets as well.
It is clear that the complexes determined by the monomials of function $\varphi$ must be in the set 
$\mathcal{C}$, but arbitrary further complexes might be involved as well, which appear in the original kinetic 
equations with zero coefficients. 
These additional complexes change the dimensions of the matrices $Y$ and $A'_k$, therefore we have to modify 
the matrices $M$ and $\Phi_T$ as well, in order to get the following equation:

\begin{equation} \label{eq:linconj_mod}
Y\cdot A_k'\cdot \psi(x)  = T^{-1}\cdot M' \cdot \Phi'_T \cdot \psi(x)
\end{equation}

\noindent where the  matrices $M' \in \mathbb{R}^{n \times m}$ and $\Phi'_T \in \mathbb{R}^{m \times m}$ have 
the same columns and diagonal entries as $M$ and $\Phi_T$ belonging to the complexes determined by 
$\varphi$, and zero columns and $1$ diagonal entries belonging to all additional complexes, respectively.

\chg{Since there is the same monomial-type vector-mapping $\psi$ on both sides of Equation \eqref{eq:linconj_mod}, the equation can be fulfilled if and only if the coefficients belonging the same monomials are pairwise identical. This means that by using the notation $A_k=A'_k\cdot {\Phi'_T}^{-1}$, we can rewrite Equation \eqref{eq:linconj_mod} as}

\begin{equation} \label{eq:linconj_simple}
Y\cdot A_k  = T^{-1}\cdot M' 
\end{equation}

\noindent where $A_k$ is a Kirchhoff matrix, too, obtained by scaling the columns of $A_k'$ by positive 
constants. It is easy to see that this operation preserves the set of reactions, but changes 
the values of the non-zero entries. The actual reaction rate coefficients of the linearly conjugate network 
are contained in the matrix 
\begin{align}\label{eq:linconjtraf}
A'_k=A_k \cdot \Phi'_T
\end{align}

From now on we will consider only linearly conjugate realizations on a fixed set of complexes.
A reaction network which is a linearly conjugate realization of a kinetic system can be identified by its 
matrices $T$, $Y$, and $A'_k$. However, since matrix $Y$ is fixed,  and the matrix $A_k$ is returned by the 
computation, we will simply denote this realization by the 
matrix pair $(T,A_k)$. 

\subsection{Graph representation}\label{subs:graph}
A reaction network can be described by a weighted directed graph.

\begin{definition}
The graph $G(V,E)$ representing the CRN is called \textbf{Feinberg--Horn--Jackson graph}, or 
\textbf{reaction graph} for short. If the weights are given by the function 
$w: E(G) \rightarrow \mathbb{R}_{+}$, the reaction graph is defined as follows:
\begin{itemize}
\item the \textbf{vertices} correspond to the complexes, $V(G)=\mathcal{C}$,
\item the \textbf{directed edges} describe the reactions, $E(G)=\mathcal{R}$,\\ 
there is a directed edge from vertex $C_i$ to vertex $C_j$  if and only if the reaction $C_i \rightarrow C_j$ 
takes place (i.e. $k_{ij}>0$),
\item the \textbf{weights} of the edges are the reaction rate coefficients, \\
$w((C_i,C_j))= k_{ij}$ where $(C_i,C_j) \in \mathcal{R}$.
\end{itemize}
Loops and multiple edges are not allowed in a reaction graph.
\end{definition}

The applicability of the algorithm presented in this paper depends on a recently proved important property of 
the so-called dense realizations. Therefore, we formally define the notion of dense realizations, and then 
recall the related result from \cite{Acs2015}.
\begin{definition} \label{def_dense}
A realization of a CRN is a \textbf{dense realization} if the maximum number of reactions take place. 
\end{definition}
It is easy to see that the dense property of a CRN realization is equivalent to the feature that its Kirchhoff 
matrix $A_k$  has the maximum number of positive off-diagonal entries.
\chg{In a set of weighted directed graphs we call a graph super-structure if it contains each element as 
a subgraph not considering edge weights, and it is minimal under inclusion. It is clear that the structure of the super-structure graph is 
unique, because there cannot be two different graphs that are subgraphs of each other, but have different 
structures. The following proposition published in \cite{Acs2015} says that dense linearly conjugate realizations 
have a super-structure property even if an arbitrary additional finite set of linear constraints on the rate 
coefficients and the transformation parameters has to be fulfilled.}

\begin{prop} \label{superstr} \emph{\cite{Acs2015}} Among all the realizations linearly conjugate to a given 
kinetic system and fulfilling a finite set of additional linear constraints there is a realization determining 
a super-structure.
\end{prop}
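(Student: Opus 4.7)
The plan is to exploit the convexity of the set of linearly conjugate realizations. Substituting $D = T^{-1}$ (a diagonal matrix with positive diagonal entries $d_i$) into \eqref{eq:linconj_simple} turns the defining identity $Y\cdot A_k = T^{-1}\cdot M'$ into a system of \emph{linear} equations in the joint parameter vector $(D, A_k)$. The Kirchhoff conditions on $A_k$ (off-diagonal entries nonnegative, columns summing to zero), the strict positivity constraints $d_i > 0$, and any finite collection of additional linear constraints on the entries of $A_k$ and $D$ are likewise linear (equalities and inequalities). Hence the feasible set $\mathcal{F}$ of admissible pairs $(D, A_k)$ is convex, and positive diagonality of $D$ is preserved under convex combinations with positive coefficients.

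Next, let
\[
  E^\star = \bigl\{(i,j) : i\neq j,\ \exists\, (D, A_k) \in \mathcal{F} \text{ with } [A_k]_{ij} > 0 \bigr\},
\]
a finite set of \emph{achievable edges}. For each $(i,j) \in E^\star$, fix a witness $(D^{(ij)}, A_k^{(ij)}) \in \mathcal{F}$ having $[A_k^{(ij)}]_{ij} > 0$, and form the averaged realization
\[
  (D^\star, A_k^\star) \;=\; \frac{1}{|E^\star|} \sum_{(i,j) \in E^\star} \bigl(D^{(ij)}, A_k^{(ij)}\bigr).
\]
Convexity of $\mathcal{F}$ yields $(D^\star, A_k^\star) \in \mathcal{F}$, with strictly positive diagonal entries of $D^\star$.

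I would then verify two claims about this averaged realization. For every $(i,j)\in E^\star$, all summands have nonnegative $(i,j)$-entry (by the Kirchhoff sign convention), and the summand corresponding to $(i,j)$ itself has a strictly positive $(i,j)$-entry, so $[A_k^\star]_{ij}>0$. Conversely, if $(i,j)\notin E^\star$, then $[A_k]_{ij} = 0$ for every $(D,A_k) \in \mathcal{F}$, so $[A_k^\star]_{ij}=0$ as well. Thus the directed edge set of the reaction graph associated with $(D^\star, A_k^\star)$ is exactly $E^\star$, the union of edge sets over all feasible realizations. This single realization therefore contains every feasible one as a subgraph (ignoring weights); its minimality among such containing graphs follows because it is itself feasible, and uniqueness up to isomorphism is the standard mutual-subgraph argument already noted in the text.

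The main conceptual obstacle is recognising that the apparent bilinearity of linear conjugacy disappears once one uses $D = T^{-1}$ in place of $T$, so that the entire feasibility set becomes a (relatively open) convex polyhedron; after that, the super-structure realization is produced by a one-line averaging trick, with the proof reducing to the observation that positive convex combinations preserve both the strict positivity $d_i > 0$ and the strict positivity of the finitely many $(i,j)$-entries selected as witnesses.
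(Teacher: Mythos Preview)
The paper does not itself prove this proposition; it is quoted from \cite{Acs2015} (by the same authors) as an established result. Your argument is correct and is the natural convexity proof for this type of statement: after the substitution $D=T^{-1}$, the defining equation $Y\cdot A_k = D\cdot M'$, the Kirchhoff sign and column-sum conditions, the strict positivity $d_i>0$, and the additional linear constraints together cut out a convex set of pairs $(D,A_k)$, and a strictly positive convex combination of finitely many witness realizations---one for each achievable off-diagonal position---yields a feasible realization whose off-diagonal support is exactly the union $E^\star$. This is the same averaging idea behind the proof in \cite{Acs2015}, so your proposal matches the intended approach. The only minor omission is the degenerate case $E^\star=\emptyset$ (or $\mathcal{F}=\emptyset$), which is trivial.
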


\chg{An important example of the linear constraints mentioned in Proposition \ref{superstr} is the condition 
for mass conservation. According to this, the total mass in the CRN is preserved (i.e. the 
system is called \textit{kinetically mass conserving} \cite{Nagy2014}) if and only if there exists a strictly 
positive vector $k\in\mathbb{R}^n$  such that the following holds:
\begin{equation}\label{eq:masscons}
k^{\top} \cdot Y \cdot  A_k = \mathbf{0}^{\top},
\end{equation}
where $\mathbf{0} \in \mathbb{R}^n$ is the null vector.
The chemical meaning of the vector $k$ is that its entries are the molecular 
(or atomic) weights of the species in the network. By using Equation \eqref{eq:linconjtraf}  
it is easy to see that for a given vector $k$, Equation \eqref{eq:masscons} holds if and 
only if $(k^{\top} \cdot Y \cdot A'_k)^{\top}$ is the null vector, since $\Phi'_T$ is an 
invertible diagonal matrix. Therefore, the computation of all reaction graph structures that describe realizations obeying mass conservation with a given vector $k$ is also possible  by using the 
method proposed in Section \ref{sec_alg} and adding Equation \eqref{eq:masscons} to the computation 
constraints.}

\section{Computation model for computing linearly conjugate realizations} \label{subs:opt_model}
Linearly conjugate realizations can be computed by using a linear optimization model \cite{Johnston2013}. As 
it was described in Section \ref{subs:alg}, Equation \eqref{eq:linconj_simple} must be fulfilled. \chg{We 
assume that the set of complexes is given, so the coefficient matrix does not need any modification, therefore 
$M = M'$ holds.} Consequently the equation characterizing linearly conjugate realizations 
can be written as follows:
\begin{equation} \label{a}
T^{-1} \cdot M - Y \cdot A_k = \mathbf{0}
\end{equation}
where $\mathbf{0} \in \mathbb{R}^{n \times m}$  denotes the zero matrix. The matrices $Y$ and $M$  
are fixed, and the variables are contained in the matrices $T^{-1}$ and $A_k$. Specifically, the 
variables are the off-diagonal entries of the matrix $A_k$  and the diagonal entries of the matrix $T^{-1}$. 
We do not consider the diagonal entries of matrix $A_k$ as variables, 
because these are uniquely determined by the off-diagonal entries of matrix $A_k$:
\begin{equation} \label{b}
[A_k]_{ii} =- \sum \limits_{\substack{j =1 \\ j \neq i}}^{m} [A_k]_{ji} \quad   i \in \{1, \ldots, m\}  
\end{equation}
Equation \eqref{a} guarantees linear conjugacy, and  Equations \eqref{b}, \eqref{c}, and 
\eqref{d} ensure that the matrices $T^{-1}$ and $A_k$ meet their definitions.
\begin{align} 
\ &[A_k]_{ij} \geq 0  &    i,j \in  \{1,\ldots ,m\}, \  i\neq j \label{c} \\
\ &[T^{-1}]_{ii} > 0 &  i \in \{1,\ldots, n\} \label{d}
\end{align}
In our algorithm it will be necessary to exclude some set $\mathcal{H} \subset \mathcal{R}$ of reactions from 
the computed realizations. This can be written in the form of a linear constraint as follows:
\begin{equation} \label{e}
[A_k]_{ji} = 0 \qquad  (C_i,C_j) \in \mathcal{H}
\end{equation}
The feasibility of the linear constraints \eqref{a}--\eqref{e} can be
verified, and valid solutions (if exist) can be determined in the 
framework of linear programming.

It is important to remark  that in the above computation model, the boundedness property of all variables can 
be ensured so that the set of possible reaction graph \chg{structures} remains the same as it was proved in \cite{Acs2015}.
\begin{prop} \emph{\cite{Acs2015}} \label{prop:upperbound}
For any linearly conjugate realization $(T,A_k)$ of a kinetic system there is another linearly conjugate 
realization $(T^*,A^*_k)$ with all variables smaller than the given upper bound(s) so that \chg{the reaction graphs describing the two realizations are structurally identical.}
\end{prop}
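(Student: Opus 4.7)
The plan is to exploit the scaling invariance of equation \eqref{a}. The equation $T^{-1}M = Y\cdot A_k$ is linear and homogeneous in the pair of matrix variables $(T^{-1},A_k)$: for any $\lambda>0$, if $(T,A_k)$ satisfies it, then so does $(T^*,A_k^*)$ with $(T^*)^{-1} = \lambda T^{-1}$ and $A_k^* = \lambda A_k$ (equivalently $T^* = (1/\lambda)\, T$). The first step is to take $\lambda$ as a free parameter and verify that the resulting pair still meets every constraint of the computation model: \eqref{a} holds by linearity as just noted; $T^*$ remains positive definite diagonal because $\lambda>0$; the off-diagonal entries of $A_k^*$ stay non-negative and its diagonal entries are still the negative column sums, so $A_k^*$ is a Kirchhoff matrix; and any exclusion constraint of the form \eqref{e} imposed on $A_k$ is automatically inherited by $A_k^*$ since zeros remain zeros under positive scaling.

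Next I would argue that the reaction graph is structurally unchanged. Both realizations share the fixed vertex set $\mathcal{C}$ encoded by the common matrix $Y$, and the edge set of a Feinberg--Horn--Jackson graph is determined by the support (the zero/non-zero pattern) of the off-diagonal entries of the Kirchhoff matrix. Since $\lambda>0$, we have $[A_k^*]_{ji}>0$ if and only if $[A_k]_{ji}>0$ for every pair $i\ne j$, so the two edge sets coincide and only the weights differ. Hence the two graphs are structurally identical in the sense required by Proposition \ref{prop:upperbound}.

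Finally I would fix $\lambda$ small enough to meet the prescribed upper bounds. Let $V$ denote the maximum value among the finitely many variables $[T^{-1}]_{ii}$ and off-diagonal entries $[A_k]_{ji}$ in the given realization, and let $b>0$ denote the smallest prescribed upper bound. Then any choice of $\lambda$ with $0<\lambda<b/V$ yields a realization $(T^*,A_k^*)$ whose every variable is strictly less than its bound, because all variables scale by the common factor $\lambda$. The only subtle point worth flagging is preserving the strict positivity \eqref{d} of $[T^{-1}]_{ii}$, which holds precisely because $\lambda>0$. Apart from this the statement is a direct consequence of the homogeneity of \eqref{a} together with the weight-agnostic definition of the reaction graph structure, so I do not foresee any real obstacle in the proof.
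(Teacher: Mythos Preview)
Your proposal is correct. The paper itself does not include a proof of this proposition---it is cited from \cite{Acs2015}---but the scaling argument you employ is exactly the mechanism the paper invokes elsewhere: at the start of Section~\ref{sec_alg} it notes that ``the matrices $A_k$ and $T^{-1}$ in Equation \eqref{a} can be scaled by the same arbitrary positive scalar,'' which is precisely your key step.
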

In our algorithm it is necessary to compute constrained dense linearly conjugate realizations, which rises two 
technical problems. The first is that we have to maximize the number of positive off-diagonal entries of 
matrix $A_k$. The second one is that there are strict inequalities (the diagonal entries of matrix $T^{-1}$ 
must be strictly positive). There are several possibilities for handling these issues. In our implementation, we will apply the 
method presented in \cite{Acs2015}, which can determine constrained dense linearly conjugate realizations in 
polynomial time. \chg{The reasons for this choice are briefly the following. Several solutions use mixed 
integer linear programming (MILP) for determining dense realizations (see e.g. \cite{Johnston2013} and the 
references therein). However, MILP problems are known to be NP-hard and their application would not allow  
the time complexity of the computation between displaying any two consecutive realizations to be polynomial. 
Moreover, the treatment of strict inequalities in optimization is not trivial and needs special attention 
\cite{Boyd2004}. Therefore, inequalities like $x>c$ are  often transformed in practice to the form 
$x \geq c+ \varepsilon$, where  $\varepsilon >0$ is a small number, but this may make the computations less 
accurate. The method in \cite{Acs2015} avoids both issues mentioned above by utilizing the special properties 
of linearly conjugate realizations, and therefore, this is the most reliable polynomial-time method that we 
currently know.}

\section{Algorithm for computing all realizations} \label{sec_alg}

\chg{Linearly conjugate realizations are parametrically not unique, the matrices $A_k$ and $T^{-1}$ in 
Equation \eqref{a} can be scaled by the same arbitrary positive scalar \cite{Johnston2011conj,Acs2015}. 
Therefore, our aim is to determine all the possible reaction graph structures describing linearly conjugate 
realizations of a kinetic system. In this section we present and analyze an algorithm for computing all such 
reaction graph structures. This is the main result of the paper.}

\bigskip

\chg{From now on the reaction graphs will be assumed to be unweighted directed graphs, i.e. by reaction graph 
we will mean only its structure.}
According to Proposition \ref{superstr}, if $G_D$ is the reaction graph of the dense realization and graph 
$G_R$ describes another realization, then $E(G_R) \subseteq E(G_D)$ holds. As it was proven in \cite{Acs2015}, 
the dense realization can be determined by a polynomial algorithm, which is the first step of our method.

There might be reactions that take place in each realization. These are called \textbf{core reactions}, and 
the edges representing them are the \textbf{core edges}. The set of core edges is denoted by $E_c$, which 
can also be determined by a polynomial algorithm \cite{Szederkenyi2011c}. \chg{It is worth doing this step, 
since it might save some computational time and space, but it is not necessary for the running of the 
algorithm.}

Based on the above, each reaction graph can be uniquely determined if it is known which non-core reactions of 
the dense realization take place in the reaction network. \chg{Thus,} we represent the reaction 
graphs by  binary sequences of length $N = | E(G_D)\setminus E_c|$.

In order to define the binary sequences we fix an ordering of the non-core edges. Let $e_i$ denote the 
$i$th edge.  If $R$ is a binary sequence, then let $R[i]$ denote the $i$th coordinate of the sequence and 
$G_R$ be the reaction graph described by $R$. If a realization can be decoded by the binary sequence $R$, then
\begin{equation}
e \in E(G_R) \Longleftrightarrow
\begin{cases}
e \in E_c \\
\quad or \\
\exists i \in \{1, \ldots , N\} \quad e=e_i , \  R[i]=1
\end{cases}
\end{equation}

From now on the term `sequence' will refer to such a binary sequence of length $N$. The sequence representing 
the dense realization (with all coordinates equal to 1) will be denoted by $D$.

For the efficient operation of the algorithm, appropriate data structures are needed as well.
The discovered graph structures are stored in a binary array of size $2^N$ called $Exist$, where the 
indices of the fields are the sequences as binary numbers. At the beginning the values 
in every field are zero, and after the computation the value of field $Exist[R]$ is 1 if and only if there is 
a linearly conjugate realization described by the sequence $R$.

We also need $N+1$ stacks, indexed from $0$ to $N$. The $k$th stack is referred to 
as $S(k)$. During the computation, sequences are temporarily stored in these stacks, following the 
rule: the sequence $R$ describing a linearly conjugate realization might be in stack $S(k)$ if and only if 
there are exactly $k$ coordinates of $R$ which are equal to 1, i.e. exactly $k$ reactions take 
place in the realization.

At the beginning all stacks are empty, but during the running of the algorithm we push in and pop out 
sequences from them. The command `push $R$ into $S(k)$'  pushes the sequence $R$ into the stack $S(k)$, and 
the command `pop $S(k)$' pops a sequence out from $S(k)$ and returns it. 
(It makes no difference, in what order are the elements of the stacks popped out, but by the definition of the 
data structure the sequence pushed in last will be popped out first.)
The number of sequences in stack $S(k)$ are denoted by size.$S(k)$, and the number of coordinates equal to 
1 in the sequence $R$ are referred to as $e(R)$.

\bigskip

\noindent Within the algorithm the following procedure is used repeatedly:

\bigskip

\noindent \textbf{FindLinConjWithoutEdge}($M,Y,R,i$) computes a constrained dense linearly conjugate 
realization of the kinetic system with coefficient matrix $M$ and complex composition matrix $Y$. The 
additional inputs $R$ and $i$ are a  sequence encoding the input reaction graph structure, and  an integer 
index, respectively. The procedure returns a  sequence $U$ encoding the graph structure of the computed 
realization such that $G_U$ is a subgraph of $G_R$ and $U[i]=0$. \chg{If there is no such realization, then 
$-1$ is returned.} This computation can be carried out in polynomial time as it is described in 
\cite{Acs2015}.

\bigskip

\begin{algorithm}[H]
\caption{Determines all reaction graphs describing linearly conjugate realizations}
\label{alg:all}
\begin{algorithmic}[1]
\Procedure{Linearly conjugate graph structures}{$M,Y$}
\State push $D$ into $S(N)$ 
\State $Exist[D]$:=1
\For {$k=N$ to $1$}
\While{size.$S(k)>0$}
\State $R$:= pop $S(k)$
\For {$i = 1$ to $N$}
\If {$R[i]=1$}
\State $U:=$ FindLinConjWithoutEdge($M,Y,R,i$)
\If {\chg{$U \geq 0$ \textbf{ and }}$Exist[U]=0$}
\State $Exist[U]$:= 1
\State push $U$ into $S(e(U))$
\EndIf
\EndIf
\EndFor
\State \chg{Print $R$}
\EndWhile
\EndFor
\EndProcedure
\end{algorithmic}
\end{algorithm}

\begin{prop} For any kinetic system and any suitable fixed set of complexes all the possible reaction graphs 
describing linearly conjugate realizations can be computed after finitely many steps by 
\emph{\textbf{Algorithm 1}}. The whole computation might last until exponential time depending on the number 
of different reaction graphs, but the time elapsed between the displaying of two linearly conjugate 
realizations is always polynomial.
\end{prop}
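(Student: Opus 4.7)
The plan is to establish in turn: soundness (every printed sequence encodes a valid linearly conjugate realization), completeness (every such realization is printed), termination in finitely many steps, and the polynomial-delay bound. Soundness is immediate, since the dense realization $D$ pushed in line~2 exists by Proposition~\ref{superstr} and every other sequence written into a stack is the output of \textbf{FindLinConjWithoutEdge}, hence a genuine realization by construction. Termination is also routine: the $Exist$ table prevents any sequence from being pushed more than once, so at most $2^N$ sequences are ever pushed and each pop triggers only $N$ polynomial-time subroutine calls.

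The substantive step is completeness. For any exclusion set $S \subseteq \{1,\ldots,N\}$, write $D_S$ for the super-structure realization guaranteed by Proposition~\ref{superstr} among all linearly conjugate realizations fulfilling the additional constraints $[A_k]_{ji}=0$ for every edge $(C_i,C_j)$ indexed by $S$. I would first record two facts: (a) $D_{S'} \subseteq D_S$ whenever $S \subseteq S'$, directly from the super-structure maximality; and (b) every linearly conjugate realization $R^*$ equals $D_{S^*}$ where $S^* = \{i : R^*[i]=0\}$, because the super-structure with the constraints induced by $S^*$ must simultaneously contain $R^*$ (by maximality) and be contained in $R^*$ (by the constraints). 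So it suffices to show that every $D_S$ is pushed. I would argue this along the chain $D_{\emptyset}=D,\,D_{\{i_1\}},\,D_{\{i_1,i_2\}},\dots,D_{S}$ obtained by adjoining the elements of $S$ one at a time, using the critical identity
\[
 \textbf{FindLinConjWithoutEdge}(M,Y,D_{S},i) = D_{S \cup \{i\}},
\]
which holds because the subgraphs of $D_S$ that happen to be realizations are exactly the realizations satisfying the constraints coded by $S$ (by the super-structure property of $D_S$), so imposing the further exclusion of $e_i$ singles out the realizations satisfying the constraints of $S \cup \{i\}$. If $D_S[i]=0$ the call is skipped, but then $D_{S \cup \{i\}}=D_S$ since no realization compatible with $S$ can contain $e_i$ anyway. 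Each non-trivial step strictly decreases $e(\cdot)$, so by the outer loop order ($k=N$ down to $1$) the sequence $D_S$ is popped and processed before $D_{S \cup \{i\}}$ is referenced; thus $D_{S \cup \{i\}}$ gets pushed during that processing, and iterating along $S$ shows $D_{S^*}=R^*$ is eventually pushed and therefore printed.

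The main obstacle is justifying the critical identity above, i.e.\ establishing the equivalence ``subgraph of $D_S$ that is a realization $\Leftrightarrow$ realization satisfying the $|S|$ exclusion constraints defining $D_S$''; this requires being careful with the translation between sequence-level subgraph containment and the linear constraints $[A_k]_{ji}=0$ enforced inside \textbf{FindLinConjWithoutEdge}, but it follows cleanly from the maximality part of Proposition~\ref{superstr}. Once completeness is in hand, the polynomial-delay bound is straightforward: between two consecutive prints the algorithm performs at most $N$ calls to the polynomial-time \textbf{FindLinConjWithoutEdge} of \cite{Acs2015} (whose LP feasibility is bounded via Proposition~\ref{prop:upperbound}), together with $O(N)$ stack and $Exist$-table operations, and walks through at most $N$ empty stacks before locating the next non-empty one.
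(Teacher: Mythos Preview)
Your argument is correct, and it differs from the paper's in a meaningful way. The paper proves completeness by a \emph{minimal counterexample} argument: assuming some realization $W$ is missed, it picks a computed $R$ of minimum edge count with $G_W\subseteq G_R$, locates an index $j$ with $R[j]=1$, $W[j]=0$, observes that the call \textbf{FindLinConjWithoutEdge}$(M,Y,R,j)$ must return a $U$ with $G_W\subseteq G_U\subsetneq G_R$, and derives a contradiction either way ($W=U$ or $W\neq U$). You instead give a \emph{constructive} proof: you identify every realization $R^*$ with the constrained super-structure $D_{S^*}$, and then trace an explicit descending chain $D_\emptyset,D_{\{i_1\}},\dots,D_{S^*}$ through the algorithm via the identity \textbf{FindLinConjWithoutEdge}$(M,Y,D_S,i)=D_{S\cup\{i\}}$, handling the degenerate case $D_S[i]=0$ separately.

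Both approaches hinge on Proposition~\ref{superstr}, but yours makes the lattice structure of the $D_S$'s explicit and thereby clarifies \emph{why} the edge-by-edge exclusion strategy is exhaustive; the paper's argument is terser and avoids setting up that machinery. Your treatment of the polynomial delay is also slightly more careful than the paper's, since you account for the up to $N$ empty-stack checks that may occur between consecutive prints, a detail the paper leaves implicit. The one place to be attentive in your write-up is the ``critical identity'': strictly speaking the subroutine excludes all edges outside $G_{D_S}$, not just those in $S$, so the returned structure is a priori $D_{Z_S\cup\{i\}}$ with $Z_S=\{j:D_S[j]=0\}\supseteq S$; your equivalence ``subgraph of $D_S$ that is a realization $\Leftrightarrow$ realization satisfying the constraints of $S$'' is exactly what collapses $D_{Z_S\cup\{i\}}$ to $D_{S\cup\{i\}}$, and you have justified it correctly.
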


\begin{proof} 
Let us assume that there is a sequence $W$ which is not returned by the algorithm, but it describes a linearly 
conjugate realization of the kinetic system. Let $R$ be another realization, which was computed by the 
algorithm and $G_W$ is a subgraph of $G_R$, i.e. $R[i]=1$ if $W[i]=1$  for all $i \in \{1, \ldots ,N\}$. 
The sequence $D$ fulfils this property for each $W$, but if there is more than one such 
sequence, then let us chose $R$ to be the one with minimum number of coordinates equal to 1.
It follows from the definition of the sequences $W$ and $R$ that there must be an index $j$ so that 
$W[j]=0$ and $R[j]=1$. (If there is more than one such index, then let us chose the smallest one.)
During the computation there is a step (line 9) when we apply procedure 
\textbf{FindLinConjWithoutEdge}($M$,$Y$,$R$,$j$) to 
compute the sequence $U$, which is a dense realization with the properties: $U[j]=0$ and $G_U$ is a 
subgraph of $G_R$. \chg{According to the properties of $W$, it fulfils the constraints of the optimization problem 
specified in the procedure, therefore it cannot be infeasible and $G_W$ must be a subgraph of $G_U$.} 
But it leads to a contradiction, since if $W$ is equal to $U$, then $W$ is returned by the algorithm, and if 
they are not equal, then $R$ is not minimal.

A sequence $R$ popped out from stack $S(k)$ is \chg{printed out} after all its coordinates are 
examined. This computation requires the application of the procedure 
\textbf{FindLinConjWithoutEdge}($M$,$Y$,$R$,$i$)  $k$ times, with some additional minor computation. 
From the properties of the procedure it follows that the  computation between displaying two consecutive 
realizations can be performed in polynomial time.

\chg{In each stack there might be finitely many sequences (in stack $S(k)$ at most 
$\binom{N}{k}$) and in case of each sequence, only finitely many calls of \textbf{FindLinConjWithoutEdge} have to be performed, therefore the whole computation can be performed in finite time.}
\end{proof}
\chg{It is important to see that we compute the entries of the corresponding matrices $A_k$ and $T^{-1}$ in 
each step of Algorithm 1 according to Eqs. (12)-(16), but do not store them. This is possible because to set 
up the constraints for the subsequent computations, it is enough to know only the structure (i.e. the zero and 
non-zero entries) of matrix $A_k$ corresponding to the realization that is popped out of the actual stack. 
If one wants to store and/or display the actual reaction rate coefficients and transformation parameters, it 
is possible by slightly modifying the procedure \textbf{FindLinConjWithoutEdge} to return not only the 
resulting reaction graph encoded by the binary sequence $U$, but also the matrices $T$ and $A_k$.}

\subsection{Parallelization of the algorithm}
Due to the possible large number of different \chg{reaction graphs} even in the case of relatively small 
networks (see Example 2 in Section \ref{sec:examples}), it is worth studying the parallel implementation of 
the proposed algorithm. This may dramatically improve the computational performance as it has been shown in 
the case of several other fundamental problems (see, e.g. \cite{Wosniack2015,Nejad2015}).

For the sequences $R$ in each stack $S(k)$ and for all the indices $i \in \{1, \ldots,N \}$ the 
results coming from the executions of the procedure \textbf{FindLinConjWithoutEdge}($M,Y,R,i$) do not have any 
effect on each other, consequently these might be computed in a parallel way. The obtained sequences are 
pushed into stacks with indices smaller than $k$, but only the ones that have not been found earlier.
It follows that the procedure might be applied in parallel for sequences from different stacks as well, 
since the repetition of the same computation is avoided by the application of the array $Exist$.

In case of dynamically equivalent realizations it is possible to do even more parallelization. These are 
special linearly conjugate realizations, where the transformation matrix $T$ is the unit matrix, the variables 
are the entries of matrix $A_k$, and Equation \eqref{a} determining dynamical equivalence can be written 
in a more simple form:
\begin{equation}
Y \cdot A_k = M
\end{equation}
It is easy to see that the values of the variables in the $j$th column of matrix $A_k$ depend only on the 
parameters in the $j$th column of matrix $M$ \chg{and on the entries of matrix $Y$}. Therefore the columns of $A_k$ might be computed 
parallely, and any dynamically equivalent realization can be determined by choosing a possible solution 
in case of each column and build the Kirchhoff matrix of the realization from them. It follows that the number 
of dynamically equivalent realizations, that describe different reaction graphs, is the product of the numbers 
of the possible columns.

The super-structure property of dense realizations is inherited by the columns of the matrix $A_k$, 
therefore we can use the same algorithm for these as we used for linearly conjugate realizations.

In this algorithm it is better to determine the ordering of non-core edges according to columns. 
Let us denote  the number of non-core edges in column $j$ by $N_j$, and the sequence describing the $j$th  
column of the dense dynamically equivalent realization by $D_j$. The stacks are also needed to be defined 
separately for each column. The sequence $R_j$ representing a $j$th column gets stored in stack $S_j(k)$ if 
and only if the number of coordinates equal to $1$, denoted by $e(R_j)$, is exactly $k$.

We also need a two-dimensional binary array denoted by $ExistColumn$[$j,R_j$] to store the computed 
sequences in case of each column. The first index refers to the column and the second index is the sequence as 
a binary number. At the beginning all coordinates are equal to zero.

The applied procedures are as follows:

\begin{itemize}
\item  \textbf{DyneqColumnWithoutEdge}$(M,Y,j,R_j,i)$ computes the $j$th column of the Kirchhoff matrix 
describing a constrained dense dynamically equivalent realization of the kinetic system with coefficient 
matrix $M$ and complex composition matrix $Y$. The constraints are determined by the two last inputs, a 
sequence $R_j$ and an integer index $i$. The procedure returns a sequence $U_j$ representing a $j$th column so 
that $U_j[l]=0$ if $R_j[l]=0$ for all $l \in \{1, \ldots N_j\}$ and $U_j[i]=0$. \chg{If there is no such 
column, then $-1$ is returned.} This computation can be performed in polynomial time.

\item \textbf{Build$A_k$}($ExistColumn$) builds all possible  dynamically equivalent realizations from the 
sequence parts in $ExistColumn$ and saves them in the array $Exist$.
\end{itemize}

\begin{algorithm}[H]
\caption{Determines all reaction graphs describing dynamically equivalent realizations applying 
parallelization}
\begin{algorithmic}[1]

\Procedure{Dynamically equivalent graph structures}{$M,Y$}
\For {$j=1$ to $m$}
\State push $D_j$ into $S_j(N_j)$
\For {$k=N_j$ to $1$}
\While{size.$S_j(k)>0$}
\State $R_j$:= pop $S_j(k)$
\For {$i = 1$ to $N_j$}
\If {$R_j[i]=1$}
\State $U_j:=$ DyneqColumnWithoutEdge$(M,Y,j,R_j,i)$
\If {\chg{$U_j \geq 0$ \textbf{ and }}$ExistColumn[j,U_j]=0$}
\State $ExistColumn[j,U_j]$:=1
\State push $U_j$ into $S_j(e(U_j))$
\EndIf
\EndIf
\EndFor
\EndWhile
\EndFor
\EndFor
\State Build{$A_k$}($ExistColumn$)
\EndProcedure
\end{algorithmic}
\end{algorithm}

\section{Examples}\label{sec:examples}
In this section we demonstrate the operation of our algorithm on kinetic systems with a small  
(Section \ref{Ex_small}) and a slightly bigger (Section \ref{Ex_big}) set of complexes. The algorithm was 
implemented in MATLAB \cite{Ma:2000} using the YALMIP  modelling language \cite{Loefberg2004}. 

\chg{We will see that} the number of possible reaction graphs describing linearly conjugate realizations 
\chg{grows very fast} depending on the number of complexes.

\subsection{Example 1} \label{Ex_small}
In this example taken from \cite{Szederkenyi2011}, we examine the kinetic system described by the following 
dynamical equations:
\begin{align*}
\  &  \dot{x}_1 = 3k_1 \cdot x_2^3-k_2 \cdot x_1^3 \\
\  & \dot{x}_2 = -3k_1 \cdot x_2^3+k_2 \cdot x_1^3
\end{align*}
According to the monomials there are (at least) two species, $\mathcal{S}=\{X_1,X_2\}$, and we fix the set of 
complexes to be $\mathcal{C}=\{C_1,C_2, C_3\}$, where $C_1 = 3X_2$, $C_2 = 3X_1$, and $C_3 = 2X_1+X_2$. Based 
on the above, the inputs of the algorithm -- the matrices $Y$ and $M$ -- are as follows:
\begin{center}
$Y= \begin{bmatrix}
0 & 3 & 2 \\
3 & 0 & 1
\end{bmatrix} 
\qquad
M= \begin{bmatrix*}[r]
3k_1 & -k_2 & 0 \\
-3k_1 & k_2 & 0
\end{bmatrix*} $
\end{center}
For the numerical computations, the parameter values $k_1=1$ and $k_2=2$ were used. As the result of the 
algorithm we get 18 different sequences/reaction graphs. This small example is special in the sense that the 
sets of different reaction graphs corresponding to dynamically equivalent and linearly conjugate 
realizations are the same, since the computed transformation matrix $T$ was the unit matrix in each case. 
Using the numerical results, it was easy to symbolically solve the equations for dynamical equivalence, 
therefore we can give the computed reaction rate coefficients as functions of $k_1$ and $k_2$.  

The reaction graphs are denoted by $G_1, \ldots , G_{18}$ and are presented with 
suitable reaction rate coefficients in Figure \ref{fig:Ex_small}. From the computation it follows that there 
are two reaction rate coefficients $k_{31}$ and $k_{32}$ which do not depend on the input parameters, just on 
each other, and the reactions determined by these might together be \chg{present or} non-present in the 
reaction network. 
Therefore, a nonnegative parameter $p$ is applied to determine the values of these coefficients. We get 
the reaction graphs $G_1, \ldots , G_{9}$ if the parameter $p$ is positive, and if it is zero then we get the 
reaction graphs $G_{10}, \ldots , G_{18}$. The reaction graph $G_1$ (the complete directed graph) describes 
the dense realization, and consequently all other reaction graphs are subgraphs of it (not considering the 
edge weights).

\begin{figure}[H]
\begin{center}
    \includegraphics[width=1\textwidth]{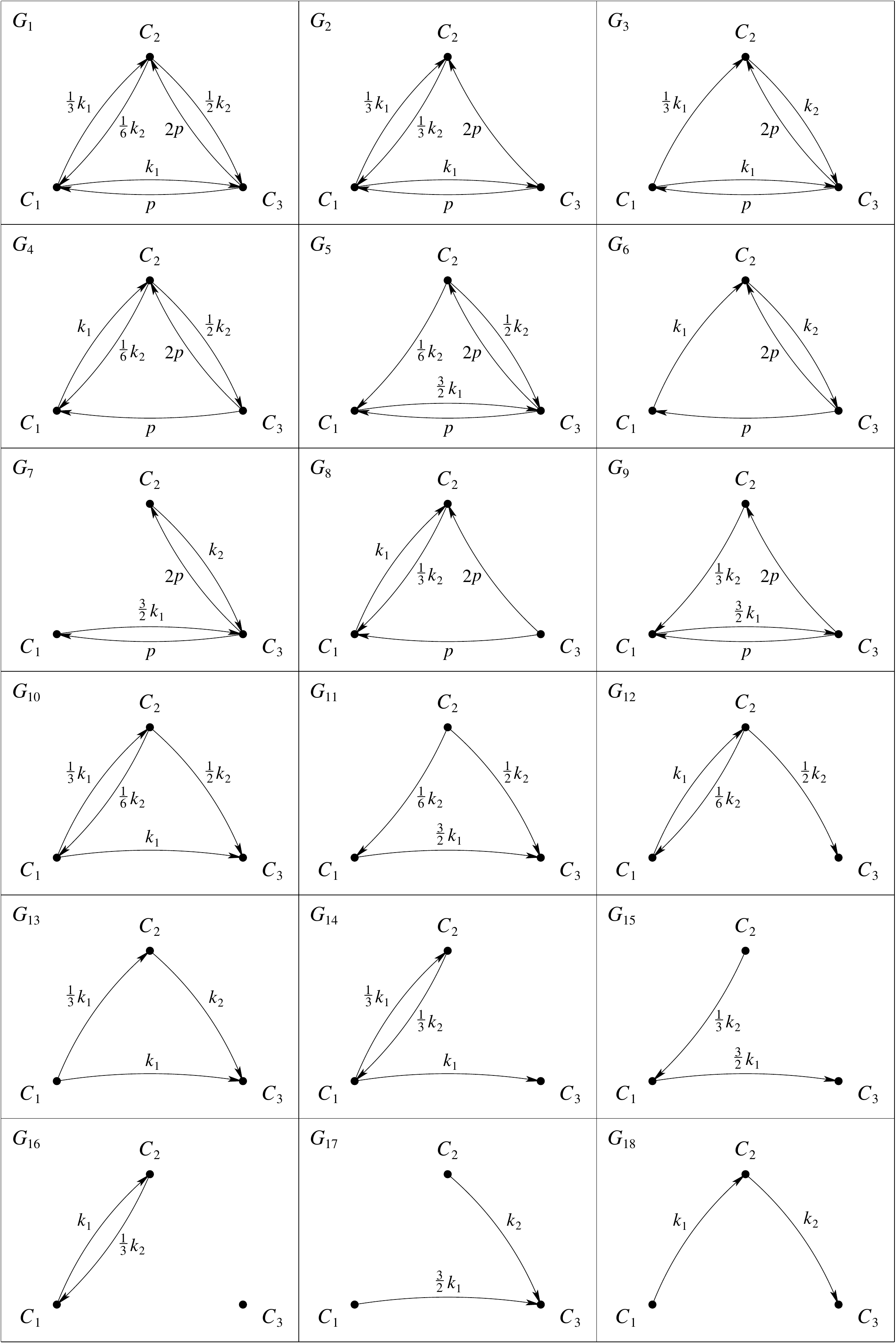}
\end{center}
  \caption{\chg{All reaction graphs of Example 1 with possible reaction rate coefficients}}
  \label{fig:Ex_small}
\end{figure}
\subsection{Example 2} \label{Ex_big}
The purpose of this example is to show the possible large number of structurally different linearly conjugate 
realizations even in the case of a relatively small kinetic system. 
The reaction network examined in this section was published in \cite{Csaszar1981} as example $A1$. In the 
original article it is given by the following realization, described by the Kirchhoff matrix $A_k$ and 
reaction graph shown in Figure \ref{fig:Ex_csaszar}.

\bigskip

\begin{multicols}{2}

\begin{figure}[H]
\begin{center}
    \includegraphics[width=0.26\textwidth]{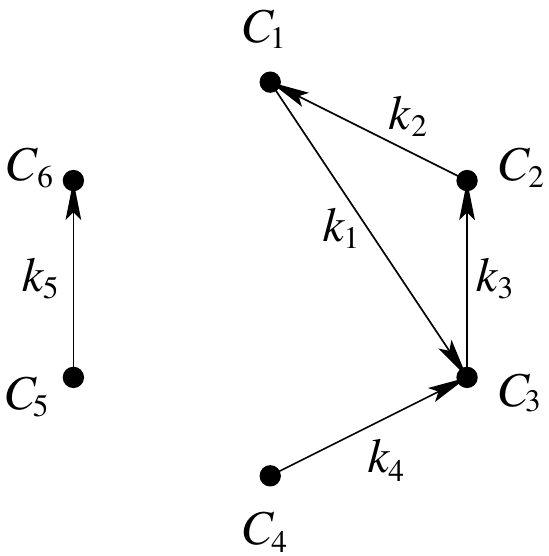}
\end{center}
  \caption{The reaction graph of Example 2}
  \label{fig:Ex_csaszar}
\end{figure}

\[A_k= \begin{bmatrix*}[r]
-k_1 & k_2 & 0 & 0 & 0 & 0 \\ 
0 & -k_2 & k_3 & 0 & 0 & 0 \\
k_1 & 0 & -k_3 & k_4 & 0 & 0 \\
0 & 0 & 0 & -k_4 & 0 & 0 \\
0 & 0 & 0 & 0 & -k_5 & 0 \\
0 & 0 & 0 & 0 & k_5 & 0 \\
\end{bmatrix*}\]

\end{multicols}

\noindent In the reaction network there are two species, $\mathcal{S}=\{X_1,X_2\}$ and six complexes,  
$\mathcal{C}=\{C_1=0,C_2=X_1,C_3=X_2,C_4=2X_1,C_5=2X_1+X_2,C_6=3X_1\}$. 
According to the definitions, the complex composition matrix $Y$ and the matrix $M= Y \cdot A_k$ of 
coefficients are as follows:

\[Y= \begin{bmatrix}
0 & 1 & 0 & 2 & 2 & 3 \\
0 & 0 & 1 & 0 & 1 & 0
\end{bmatrix} \qquad 
M= \begin{bmatrix}
0 & -k_2 & k_3 & -2k_4 & k_5 & 0 \\
k_1 & 0 & -k_3 & k_4 & -k_5 & 0
\end{bmatrix}\] 

\noindent The reaction rate coefficients used in the computations were the same as in \cite{Csaszar1981}, 
namely: $k_1=1,~k_2=1,~k_3=0.05,~k_4=0.1,~k_5=0.1$. With these parameter values the system shows oscillatory 
behaviour.
In the dense \chg{linearly conjugate} realization shown in Figure 
\ref{fig:Ex_csaszar_dense}, there are 19 reactions, and it can be 
given by the matrices $A_k^d$ and $T^d$ as:

\begin{multicols}{2}

\begin{figure}[H]
\begin{center}
    \includegraphics[width=0.3\textwidth]{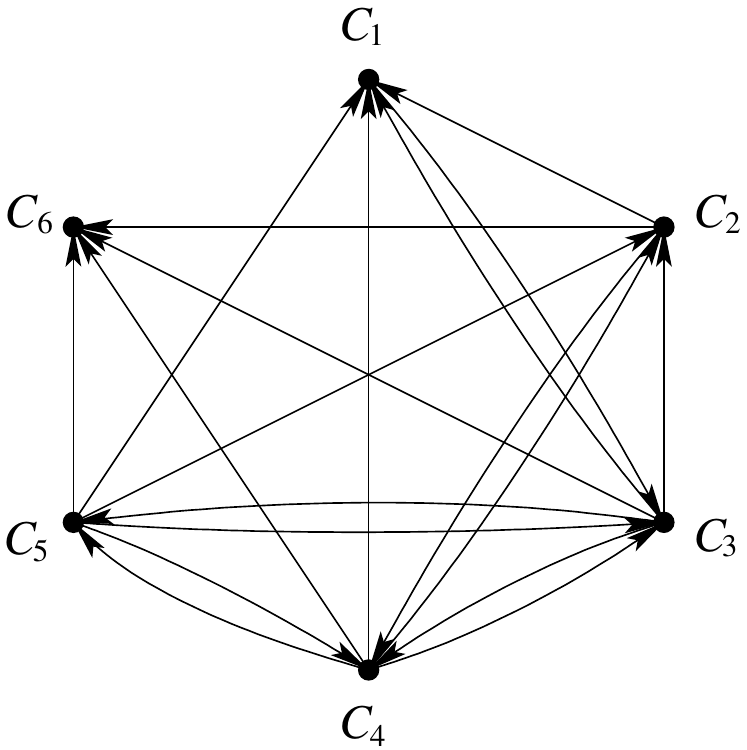}
\end{center}
  \caption{The reaction graph of the dense linearly conjugate realization}
  \label{fig:Ex_csaszar_dense}
\end{figure}

\begin{footnotesize}
\[ A_k^d= \begin{bmatrix*}[r]
-80 & 1.167 \cdot 10^7 & 3.083 & 3.333 \cdot 10^6 & 0.333 & 0 \\ 
0 & -2 \cdot 10^7  & 0.5 & 5 \cdot 10^6 & 0.5 & 0 \\
80 & 0 & -4.25 & 4 & 0.5 & 0 \\
0 & 5 \cdot 10^6 & 0.25 & -2 \cdot 10^7 & 1 & 0 \\
0 & 0 & 0.25 & 4 & -8.5 & 0 \\
0 & 3.333 \cdot 10^6 & 0.167 & 1.167 \cdot 10^7 & 6.167 & 0 \\
\end{bmatrix*}\]

\[(T^d)^{-1}= \begin{bmatrix*}[r]
40 & 0\\ 
0 & 80
\end{bmatrix*}\]
\end{footnotesize}
\end{multicols}
Our algorithm returned as many as 17160 different reaction graphs describing linearly conjugate realizations 
of this kinetic system, all of which can be found in the electronic supplement available at: 
\begin{center}
\mbox{\texttt{http://daedalus.scl.sztaki.hu/PCRG/works/publications/Ex2\_AllRealSuppl.pdf}} 
\end{center}
Out of these, 17154 can be described by a weakly connected reaction graph, while 6 have disconnected reaction 
graphs, with the same linkage classes. Since this property can be ensured by linear constraints 
(the edges between the linkage classes are excluded), according to Proposition \ref{superstr} the realization 
having the maximum number of edges determines a super-structure among realizations obeying the 
same constraints. This constrained dense realization can be 
described by the matrices $A_k^{ld}$ and $T^{ld}$, and 
its reaction graph is shown in Figure \ref{fig:Ex_csaszar_2lc}. 
\begin{multicols}{2}

\begin{figure}[H]
\begin{center}
    \includegraphics[width=0.3\textwidth]{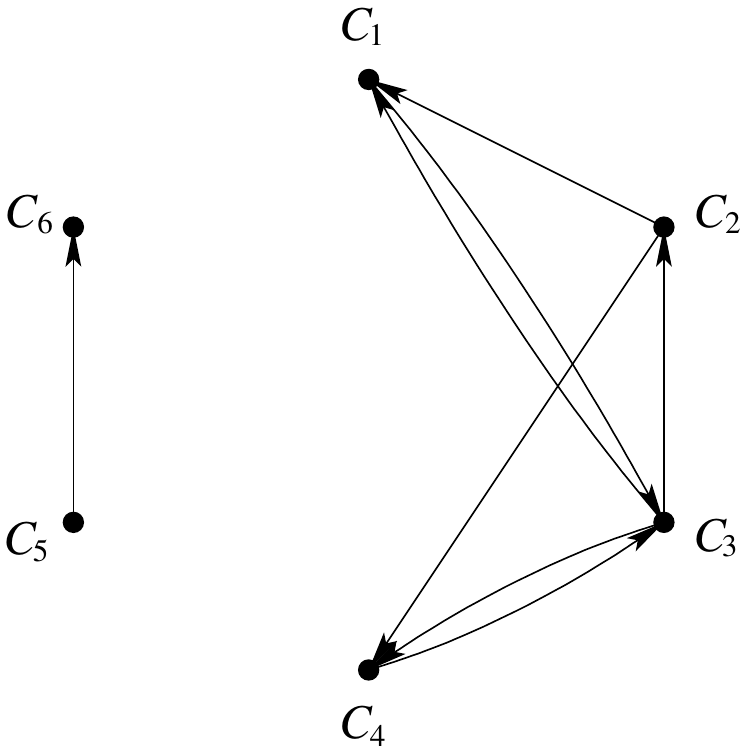}
\end{center}
  \caption{The reaction graph of the dense linearly conjugate realization having two linkage classes}
  \label{fig:Ex_csaszar_2lc}
\end{figure}

\footnotesize
\[ A_k^{ld}= \begin{bmatrix*}[r]
-50 & 1.25 \cdot 10^7 & 0.625 & 0 & 0 & 0 \\ 
0 & -2.5 \cdot 10^7  & 1.25 & 0 & 0 & 0 \\
50 & 0 & -2.5 & 5 & 0 & 0 \\
0 & 1.25 \cdot 10^7 & 0.625 & -5 & 0 & 0 \\
0 & 0 & 0 & 0 & -5 & 0 \\
0 & 0 & 0 & 0 & 5 & 0 \\
\end{bmatrix*}\]

\[(T^{ld})^{-1}= \begin{bmatrix*}[r]
50 & 0\\ 
0 & 50
\end{bmatrix*}\]
\end{multicols}
It also turned out from the computations that in this case the sparse realization is unique, and it is the 
initial network shown in Figure \ref{fig:Ex_csaszar}. The distribution of the computed reaction graphs over the number of reactions is shown in Fig. \ref{fig:edge_distr_A1}. 
\begin{figure}[H]
\begin{center}
    \includegraphics[trim=2cm 3cm 2cm 3cm, clip=true, width=0.8\textwidth]{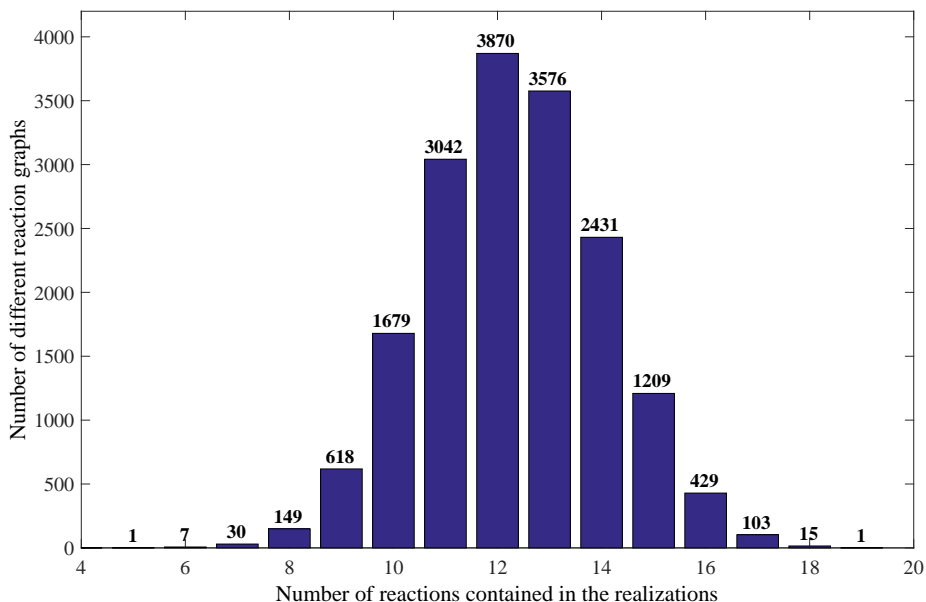}%
\end{center}
  \caption{\chg{Number of different reaction graphs with given numbers of reactions (directed edges) in the case of Example 2}}
  \label{fig:edge_distr_A1}
\end{figure}

Figures \ref{fig:A1_simulation_orig} and \ref{fig:A1_simulation_linconj} show the solutions of the original 
and the dense linearly conjugate realizations from consistent initial conditions. The oscillatory behaviour is 
clearly shown, and one can check from the results that $\overline{x}(t) = (T^d)^{-1}\cdot x(t)$ holds for all 
$t\geq 0$.

\begin{figure}[H]
\begin{center}
    \includegraphics[trim=6cm 5cm 6cm 5cm, clip=true, width=0.6\textwidth]{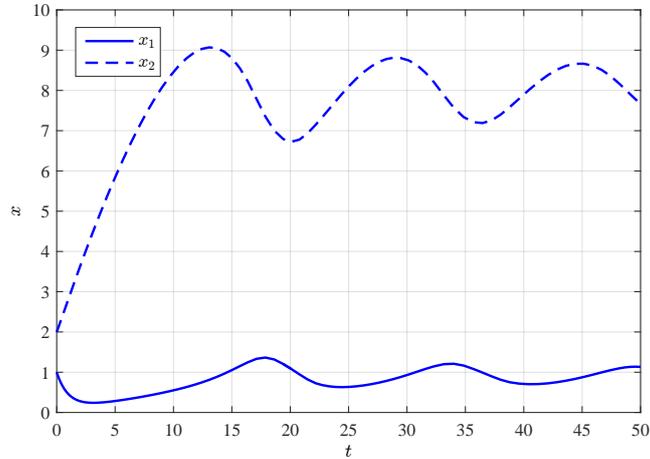}%
\end{center}    
  \caption{\chg{Solution of the original system in Example 2 defined by the matrix pair $(Y,A_k)$ from the initial conditions $x(0)=[1~2]^T$}}
  \label{fig:A1_simulation_orig}
\end{figure}
\begin{figure}[H]
\begin{center}
    \includegraphics[trim=6cm 5cm 6cm 5cm, clip=true, width=0.6\textwidth]{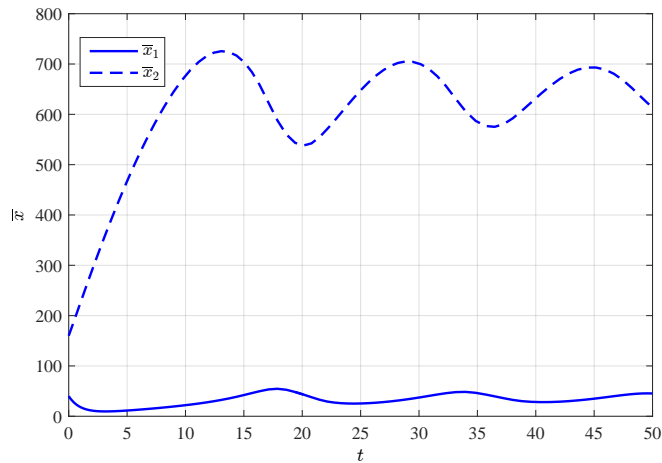}%
\end{center}    
  \caption{\chg{Solution of the linearly conjugate system in Example 2 defined by the matrix pair $(Y,A_k^d)$ from   the initial conditions $\overline{x}(0)=(T^d)^{-1}\cdot x(0) = [40~160]^T$}}
  \label{fig:A1_simulation_linconj}
  \end{figure}
  
\subsection{\chg{Computation results of the parallel implementation}}

\chg{We tested the parallel implementation of Algorithm 1 on a workstation
with two 2.60GHz Xeon (E5-2650 v2) processors with 32 Gb RAM (DDR3 1600 MHz, 0.6ns). The implementation 
was written in Matlab R2013a using the built-in parallelization toolbox.}

\chg{Let $L$ denote the number of threads. The tests were carried out with $L \in \{$1, 2, 4, 6, 8, 10, 12$\}$, where these 
threads are working on the innermost loop (lines 7 to 15) of the algorithm. We can easily conclude that the 
most time-consuming step is the calling of procedure \textbf{FindLinConjWithoutEdge} which required on average 1.9 ms 
computation time with 0.6 ms standard deviation in case of Example 2 (calculated considering 130 000 executions). }
\chg{Figure \ref{fig:total_time} shows the total execution times 
in case of both examples with different numbers of threads on log-log scales.}

\begin{figure}[H]
\begin{center}
    \includegraphics[trim=6cm 5cm 6cm 5cm, clip=true, width=0.7\textwidth]{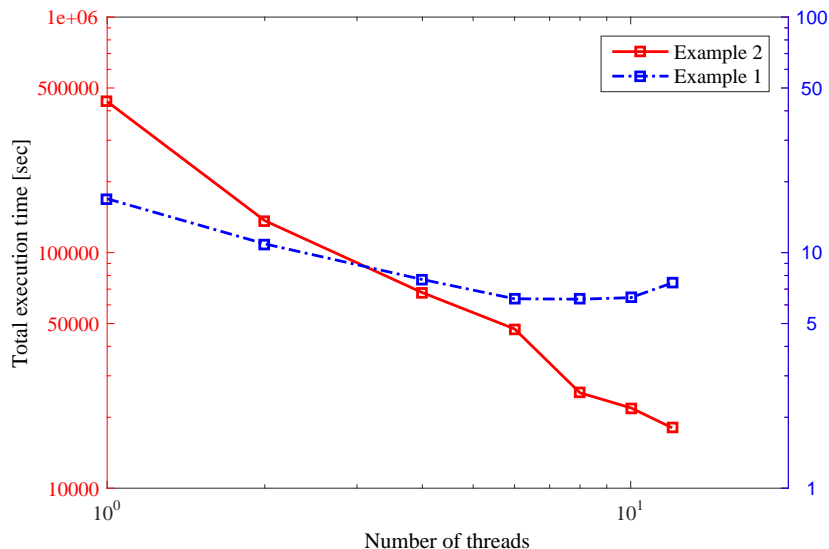}%
\end{center}    
  \caption{\chg{Overall execution times of Examples 1 and 2 with different numbers of threads}}
  \label{fig:total_time}
\end{figure}

\chg{One can expect that the computation of $N$ different edge exclusions by using the procedure 
\textbf{FindLinConjWithoutEdge} reaches maximum efficiency when $L=N$ holds.
It can be seen that in the case of Example 1 (where $N=6$) we reach the maximum efficiency indeed, and for a larger number of 
threads we get slightly longer computation times.
The improving efficiency with the larger number of threads can be seen in the case of Example 2, where the value of $N$ was 19. The results show in the studied thread-range that by doubling the number of threads, the total execution time approximately gets halved.}

\chg{In case of Example 2 we have recorded the computation times separately for the different stacks as well. Here the computation time means the time elapsed from popping out the first element from a given stack until the end of processing the last element in the same stack.  The obtained results are shown in Table \ref{tabl_stacks}. Since the sparse realization contains five reactions, the procedure \textbf{FindLinConjWithoutEdge} was not called for sequences with fewer than five nonzero coordinates. This is why stacks with index smaller than 5 are not included in the table.
}

\begin{table}
\begin{small}
\begin{center}
\begin{tabular}{|c|c|ccccccc|}
\hline
\  & \ & \multicolumn{7}{c|}{Number of threads}\\
\hline
\  & \  & 1 & 2 & 4 &  6 & 8 & 10 & 12\\
\hline
\  & $S(5)$ &       17.667    &   5.7767    &   2.6061   &    1.9417    &   1.0794   &   0.91359  &    0.83738 \\
\  & $S(6)$ &       149.11    &   42.734    &   19.173   &    16.935    &    8.763   &    7.3073  &     6.5739 \\
\  & $S(7)$ &       762.26    &   234.79    &   87.411   &    100.92    &   42.237   &    35.511  &     31.838 \\
\  & $S(8)$ &       4458.7    &   1233.5    &   482.81   &    532.03    &   236.23   &    197.91  &      177.8 \\
\  & $S(9)$ &        20355    &   6247.5    &   2503.8   &    2356.1    &   1278.1   &     891.4  &     794.43 \\
\  & $S(10)$ &        59492    &    17246    &   6921.6   &    6769.3    &     3511   &    2536.5  &     2248.6 \\
\  & $S(11)$ &   1.0534e+05    &    33706    &    13570   &     11661    &   5982.5   &    5338.1  &     3878.8 \\
Stacks  & $S(12)$ &   1.1675e+05    &    36972    &    20753   &     11958    &   6507.2   &    5825.6  &     4253.1 \\
\  & $S(13)$ &        83201    &    25604    &    14972   &    8653.4    &   4572.2   &    4085.2  &     3770.1 \\
\  & $S(14)$ &        35556    &    10750    &   5960.5   &    3755.5    &   2126.5   &    1958.5  &     1858.1 \\
\  & $S(15)$ &       9283.8    &   3501.6    &   1737.8   &    1197.8    &   787.78   &    749.61  &     723.94 \\
\  & $S(16)$ &         2015    &   898.08    &   478.54   &    353.27    &   259.49   &    246.49  &     234.34 \\
\  & $S(17)$ &        397.5    &   205.36    &   125.82   &    81.447    &   75.907   &    57.191  &     55.652 \\
\  & $S(18)$ &       57.236    &   29.231    &   17.953   &    11.173    &   11.078   &    8.6908  &     8.0421 \\
\  & $S(19)$ &       4.9291    &   3.3659    &   2.7989   &    2.0673    &   1.9249   &    1.8916  &     1.7575 \\
\hline
\end{tabular}
\end{center}
\end{small}
\caption{\chg{Processing times (in seconds) of the individual stacks in case of Example 2 for different numbers of threads}}\label{tabl_stacks}
\end{table}

\section{Conclusions}
An algorithm was proposed in this paper for computing all \chg{structurally different reaction graphs 
describing} linearly conjugate realizations of a given kinetic polynomial system. To the best of the authors' 
knowledge this method is the first provably correct solution for the exhaustive search of CRN structures 
realizing a given dynamics. The inputs of the algorithm are the complex composition matrix and the coefficient 
matrix of the studied polynomial system. The output is the set of all possible reaction graphs encoded by binary 
sequences. The correctness of the method is proved using a recent result saying that the linearly constrained 
dense realization determines a super-structure among all realizations fulfilling the same constraints 
\cite{Acs2015}. 
Although exponentially many different reaction graphs may exist, it is shown that polynomial time is elapsed 
between displaying (storing) two consecutive reaction graphs. The computation starts with the determination of 
the dense realization and different stacks are maintained for storing realizations with the same number of 
reactions. The number of stacks depends linearly on the number of reactions in the dense realization, although 
the entire bookkeeping may require exponential storage space due to the possible large number of different 
structures. This organization allows that the optimization tasks for processing the actual realization (i.e. 
computing its constrained immediate `successors') can be implemented parallely. The applicability of the algorithm is illustrated on two examples taken from the literature. \chg{The numerical results show that parallel implementation indeed improves efficacy of the computation.}

\section*{Acknowledgements}
This project was developed within the PhD program of the Roska Tamás Doctoral School of Sciences and 
Technology, Faculty of Information Technology and Bionics, Pázmány Péter Catholic University, Budapest. 
The authors gratefully acknowledge the support of the Hungarian National  Research,  Development  and  Innovation  Office  --  NKFIH through grants OTKA NF104706 and 115694. The support of Pázmány Péter Catholic University is also acknowledged through the project KAP15-052-1.1-ITK. The authors thank Dr. István Reguly for his help in evaluating the computation results of the parallel implementation of the algorithm. 

\end{document}